\documentclass[letter, 12pt]{amsart}

\usepackage[utf8]{inputenc}
\usepackage[T1]{fontenc}
\usepackage{graphicx}
\usepackage{amsfonts}
\usepackage{latexsym}
\usepackage{amsmath}
\usepackage{amssymb}
\usepackage{amsrefs}
\usepackage[all]{xy}

\usepackage[usenames]{color}
\usepackage{ifthen}

\setlength{\parindent}{0in}
\setlength{\parskip}{12pt}

\newtheorem{thm}{Theorem}[section]

\newtheorem{lem}[thm]{Lemma}
\newtheorem{prop}[thm]{Proposition}
\theoremstyle{definition}
\newtheorem{defn}[thm]{Definition}
\theoremstyle{remark}

\numberwithin{equation}{section}

\newcommand{\set}[2]{ \left\{ #1 | #2 \right\} }

\newcommand{\TO}{\longrightarrow}
\newcommand{\To}{\longmapsto}

\newcommand{\Img}{\mathop{\mathrm{Im}}}

\newcommand{\Krn}{\mathop{\mathrm{Ker}}}

\newcommand{\K}{\mathbb K}
\newcommand{\vphi}{\varphi}
\newcommand{\tphi}{\tilde{\varphi}}
\newcommand{\bphi}{\bar{\varphi}}
\newcommand{\A}{A \otimes A}

\newcommand{\huang}[3]{\ifnum #1 =1 \textcolor{blue}{#3}\else {#2}\fi }
\newcommand{\brice}[3]{\ifnum #1 =1 \textcolor{green}{#3}\else {#2}\fi }

\begin{document}


\title[Direct sums of zero product determined algebras]{Direct sums of zero product determined algebras} 
\author{Daniel Brice and Huajun Huang} 
\address{Department of Mathematics and Statistics, Auburn University, Alabama, United States} 
\email{dpb0006@auburn.edu, \ huanghu@auburn.edu} 

\thanks{} 
\subjclass{} 
\keywords{} 

\date{10 October 2011} 
\dedicatory{} 
\commby{} 

\begin{abstract} 

We reformulate the definition of a zero product determined algebra in terms of tensor products and obtain necessary and sufficient conditions for an algebra to be zero product determined.
These  conditions allow us to prove that the direct sum $\bigoplus_{i \in I} A_i$ of algebras for any index set $I$ is zero product determined if and only if each of the component algebras $A_i$ is zero product determined.
As an application, every parabolic subalgebra of a finite-dimensional reductive Lie algebra, over an algebraically-closed field of characteristic zero, is zero product determined. In particular, every such reductive Lie algebra is zero product determined.

\end{abstract}


\maketitle

Let $\K$ be a commutative ring. Given a $\K$-algebra $A$ and a $\K$-bilinear map $\vphi: A \times A \TO B$, we may ask whether or not $\vphi$ may be   written as the composition of multiplication in $A$ with a $\K$-linear transformation $\tphi$, that is, whether or not
$$\vphi(a_1,a_2) = \tphi(a_1a_2)$$
for all $a_1,a_2 \in A$ for some $\tphi : A^2 \TO B$.  (Here and throughout, $A^2$ denotes the $\K$-linear span of the products of members of $A$).

In order to study the above problem, Bre\v{s}ar, Gra\v{s}i\v{c}, and Ortega introduced the notion of a zero product determined algebra in \cite{Bresar09}.
A $\K$-algebra $A$ (not necessarily associative) is called \emph{zero product determined} if each $\K$-bilinear map $\vphi: A \times A \TO B$ satisfying
$$\vphi(a_1,a_2) = 0 \ \text{ whenever } \ a_1a_2 = 0$$
can be written as $\vphi(a_1,a_2) = \tphi(a_1a_2)$ for some $\tphi: A^2 \TO B$.
Their definition was motivated by applications to the study of zero product preserving linear maps defined on Banach algebras and on matrix algebras under the standard matrix product, the Lie product, or the Jordan product \cite{Alaminos09, Bresar09, Chebotar03, Chebotar06,Wang11b}.
Let $A, B$ be $\K$-algebras, and let $f: A \TO B$ be $\K$-linear.
$f$ is said to be zero product preserving if $f(a_1)f(a_2)=0$ whenever $a_1a_2=0$.
One would like to find conditions on $f$ or on $A$ that imply that $f$ is a homomorphism of $\K$-algebras, or is at least close to an algebra homomorphism in the following sense.
We define a mapping $\vphi(a_1,a_2)=f(a_1)f(a_2)$. Then $\vphi$ is $\K$-bilinear and satisfies $\vphi(a_1,a_2)=0$ whenever $a_1a_2=0$.
If $A$ is zero product determined, then there is a unique $\K$-linear $\tphi: A^2 \TO B$ satisfying
$$\tphi(a_1a_2)=\vphi(a_1,a_2)=f(a_1)f(a_2)$$
for all $a_1,a_2 \in A$.
If we further assume that $1_A \in A$ (so in particular $A^2=A$), then
$$ \tphi(a) = \tphi(1_Aa) = \vphi(1_A,a) = f(1_A)f(a) $$
for all $a \in A$, and by combining the above equations, we arrive at
$$f(a_1)f(a_2) = f(1_A)f(a_1a_2)$$
for all $a_1,a_2 \in A$.
As a corollary, if $A$ is zero product determined, and if $1_A \in A$ and $1_B \in B$ are identities, then any zero product preserving linear map $f: A \TO B$ that satisfies $f(1_A)=f(1_B)$ is an algebra homomorphism.

The initial work of Bre\v{s}ar, Gra\v{s}i\v{c}, and Ortega and subsequent work by Ge, Gra\v{s}i\v{c}, Li, and Wang  have provided examples both of zero product determined algebras and algebras that are not zero product determined \cite{Grasic10,Wang11b}. Recent similar work includes Bre\v{s}ar's and \v{S}merl's study of commutativity preserving linear map \cite{Bresar06} and Chen's, Wang's, and Yu's study of idempotent preserving bilinear maps and the notion of an idempotent elements determined algebra \cite{Wang11a}.

In the context of Lie algebras, Gra\v{s}i\v{c} in \cite{Grasic10} gives an example of an infinite-dimensional Lie algebra that is not zero product determined.
Bre\v{s}ar, Gra\v{s}i\v{c}, and Ortega produce a family of Lie algebras of arbitrarily-large finite dimension that are not zero product determined in \cite{Bresar09}.
In \cite{Grasic10}, Gra\v{s}i\v{c} shows that the classical Lie algebras over an arbitrary field are zero product determined.
Chen, Wang, and Yu compliment this result in \cite{Wang11b} by showing that every parabolic subalgebra of a finite-dimensional simple Lie algebra over an algebraically-closed field of characteristic zero is zero product determined.
In particular, the simple Lie algebras themselves are zero product determined, assuming the scalar field to be algebraically closed and of characteristic zero.

In this paper, we provide three main results that further research in this direction. We reformulate the definition of a zero product determined algebra in terms of tensor products and obtain necessary and sufficient conditions for an algebra to be zero product determined (Theorem \ref{zpd charactarization}).
These necessary and sufficient conditions allow us to prove that the direct sum of algebras $\bigoplus_{i \in I} A_i$ for any index set $I$ is zero product determined if and only if each component algebras $A_i$ is zero product determined (Theorem \ref{direct sums}).
We then apply this result in the context of Lie algebras, showing that all finite-dimensional reductive Lie algebras over an algebraically-closed field of characteristic zero, and all parabolic subalgebras of such Lie algebras, are zero product determined (Theorem \ref{reductive lie algebras are zpd}).

In all that follows, let $\K$ denote a fixed commutative ring.
By a \emph{module} we mean a $\K$-module. The  notation $S \leq
A$ means $S$ is a submodule of $A$. If $A$ is a module and $S
\subset A$, let $\langle S \rangle$ denote the submodule of $A$
generated by $S$.

By a \emph{linear map}, we mean a $\K$-linear map between two modules. By a
\emph{bilinear map}, we mean a $\K$-bilinear map between the Cartesian
product of two modules and a third module. We say $\vphi$ \emph{factors
through} $\psi$ to mean that $\vphi = \tphi \psi$ for some linear map $\tphi$.

\section{Preliminaries}

We will often make use of the following classical result in module theory.

\begin{lem}\label{factor through an epimorphism}
Let $A, B, C$ be modules.  Let $\vphi : A \TO B$ be a linear map, and let $\rho: A \TO C$
be a surjective linear map. There exists a linear map $\tphi:C\TO B$
satisfying $\vphi=\tphi\rho$ if and only if $\Krn \rho \subset \Krn
\vphi$, and in such a case, the map $\tphi$ is uniquely determined.
$$
\xymatrix{
A \ar[rr]^\rho \ar[dr]_\vphi & & C \ar@{-->}[dl]^{\tphi} \\
& B & \\ }
$$
\end{lem}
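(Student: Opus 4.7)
The plan is to prove the two implications separately, then check uniqueness, keeping everything at the level of elementary module theory. The forward direction is immediate: if $\tphi : C \TO B$ exists with $\vphi = \tphi \rho$, then for any $a \in \Krn \rho$ we have $\vphi(a) = \tphi(\rho(a)) = \tphi(0) = 0$, so $a \in \Krn \vphi$.

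For the reverse direction, assuming $\Krn \rho \subset \Krn \vphi$, I would construct $\tphi$ pointwise and check it is well-defined and linear. Given $c \in C$, surjectivity of $\rho$ gives some $a \in A$ with $\rho(a) = c$; set $\tphi(c) := \vphi(a)$. To see this does not depend on the choice of $a$, suppose $\rho(a) = \rho(a') = c$; then $a - a' \in \Krn \rho \subset \Krn \vphi$, so $\vphi(a) = \vphi(a')$. Linearity of $\tphi$ then follows routinely from the linearity of $\vphi$ and $\rho$: if $\rho(a) = c$ and $\rho(a') = c'$, then $\rho(ka + a') = kc + c'$, so $\tphi(kc + c') = \vphi(ka + a') = k\vphi(a) + \vphi(a') = k\tphi(c) + \tphi(c')$. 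By construction $\tphi \rho = \vphi$.

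Uniqueness follows from surjectivity: if $\tphi_1 \rho = \tphi_2 \rho = \vphi$, then for any $c \in C$, choose $a \in A$ with $\rho(a) = c$ and compute $\tphi_1(c) = \tphi_1(\rho(a)) = \tphi_2(\rho(a)) = \tphi_2(c)$, so $\tphi_1 = \tphi_2$.

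There is no real obstacle here; this is a standard application of the universal property of quotients. One could alternatively package the argument by factoring $\rho$ through the isomorphism $A / \Krn \rho \cong C$ and invoking the first isomorphism theorem for modules, but the direct pointwise construction above is shorter and self-contained.
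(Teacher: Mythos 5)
Your proof is correct and complete: the necessity direction is identical to the paper's, and for sufficiency the paper simply cites Theorem 7 in Chapter VI, Section 4 of Birkhoff--Mac Lane rather than giving the argument. Your pointwise construction of $\tphi$ (well-definedness via $\Krn \rho \subset \Krn \vphi$, linearity, and uniqueness from surjectivity of $\rho$) is exactly the standard argument being referenced, so your version is in fact more self-contained than the paper's.
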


\begin{proof}
For sufficiency, one is referred to the proof given of Theorem 7 in Chapter VI, Section 4 of \cite{Birkhoff65}.

For necessity, simply note that if such a $\tphi$ exists, and if $a \in \Krn \rho$, then
$$\vphi(a) = \tphi \rho (a) = \tphi (0) = 0.$$
\end{proof}

The \emph{tensor product} of two modules $A$ and $B$ is denoted by
$$
A \otimes B = A \otimes_{\K} B = \mathcal{F}(A \times B) / S
$$
where $\mathcal{F}(A \times B)$ is the free module taking for
generating set the Cartesian product $A \times B$, and $S$ is the
submodule generated by elements of the type
$(a_1+ka_2,b)-(a_1,b)-k(a_2,b)$ and $ (a,b_1+kb_2)-(a,b_1)-k(a,b_2)
$ for all $k \in \K,\ a, a_1, a_2 \in A,\  b, b_1, b_2 \in B$. We write $a
\otimes b$ to denote the coset $(a,b) + S$, and we call elements of
this type \emph{pure tensors}. Let
$$T=\{a\otimes b\mid a\in A,\ b \in B \}$$ denoted the set of all pure tensors. Every element of $A
\otimes B$ is a sum of finitely-many pure tensors.

Lemma \ref{factor through an epimorphism} implies the following
result.

\begin{lem}\label{tensor correspondence}
Let $A$, $B$, $C$ be modules. There is a one-to-one correspondence
between the set of bilinear maps $\vphi: A \times B \TO C$ and the
set of linear maps $\bphi : A \otimes B \TO C$, given by
$$\bphi\left(\sum_{i}a_i\otimes b_i\right)=\sum_{i}\vphi(a_i,b_i)\quad\text{and}\quad \vphi(a,b)=\bphi(a\otimes b)$$
for all $a, a_i\in A$ and $b, b_i\in B$.
\end{lem}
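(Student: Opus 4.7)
The plan is to construct the correspondence in both directions using the explicit presentation of $A \otimes B$ as $\mathcal{F}(A \times B)/S$, and then verify the two constructions are mutually inverse.

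First, I would produce the forward direction. Given a bilinear $\vphi : A \times B \TO C$, I would use the universal property of the free module $\mathcal{F}(A \times B)$ (every function on the generating set extends uniquely to a linear map) to obtain a linear map $\hat{\vphi}: \mathcal{F}(A \times B) \TO C$ sending the basis element $(a,b)$ to $\vphi(a,b)$. The bilinearity of $\vphi$ translates directly into the statement that $\hat{\vphi}$ annihilates each of the generators of the submodule $S$, and hence $S \subset \Krn \hat{\vphi}$. Applying Lemma \ref{factor through an epimorphism} to the canonical surjection $\pi: \mathcal{F}(A \times B) \TO A \otimes B$ yields a unique linear map $\bphi: A \otimes B \TO C$ with $\bphi \pi = \hat{\vphi}$. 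In particular, $\bphi(a \otimes b) = \vphi(a,b)$, and since $\bphi$ is linear it satisfies the prescribed formula on sums of pure tensors.

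Next, I would construct the inverse direction. Given a linear $\bphi: A \otimes B \TO C$, I would define $\vphi : A \times B \TO C$ by $\vphi(a,b) = \bphi(a \otimes b)$. The fact that $a \otimes b$ is $\K$-bilinear in $(a,b)$ in $A \otimes B$ (an immediate consequence of the relations generating $S$) together with linearity of $\bphi$ shows that $\vphi$ is bilinear.

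Finally, I would verify the two constructions are mutually inverse. Starting from a bilinear $\vphi$, constructing $\bphi$, and then reading off $\bphi(a \otimes b)$ recovers $\vphi(a,b)$ by construction. Conversely, starting from a linear $\bphi$ and forming $\vphi(a,b) = \bphi(a \otimes b)$, the associated linear extension agrees with $\bphi$ on pure tensors; since the pure tensors generate $A \otimes B$ as a module and both maps are linear, they coincide on all of $A \otimes B$. The main technical point is the passage through $S$ in the forward direction; everything else is bookkeeping. The uniqueness clause in Lemma \ref{factor through an epimorphism} ensures that $\bphi$ is well-defined and unambiguous, which makes the bijectivity argument clean.
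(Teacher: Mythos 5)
Your proposal is correct and follows essentially the same route as the paper: extend a bilinear $\vphi$ to the free module $\mathcal{F}(A \times B)$, check that the generators of $S$ are annihilated, factor through the projection via Lemma \ref{factor through an epimorphism} to obtain $\bphi$, define the inverse by restriction to pure tensors, and verify the two constructions are mutually inverse using that pure tensors generate $A \otimes B$. No gaps; the argument matches the paper's proof in both structure and the key technical step.
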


\begin{proof}
Suppose $\bphi : A \otimes B \TO C$ is a linear map. Let
$P(\bphi):A\times B\TO C$ be a map defined by
$P(\bphi)(a,b)=\bphi(a\otimes b)$ for all $a\in A$ and $b\in
B$. Obviously $P(\bphi)$ is a well-defined bilinear map.

Conversely, suppose $\vphi: A \times B \TO C$ is a bilinear map.
Since $\mathcal{F}(A \times B)$ is the free module on $A \times B$,
the map $\vphi$  induces a linear map
$\mathcal{F}(\vphi):\mathcal{F}(A \times B)\TO C$, such that
$$\mathcal{F}(\vphi)\left( \sum_i k_i(a_i,b_i) \right) = \sum_i k_i \vphi(a_i,b_i),
\qquad  k_i\in \K.$$

The canonical projection $\pi:\mathcal{F}(A\times B)\TO
\mathcal{F}(A\times B)/S=A\otimes B$ is a linear map with
$\Krn\pi=S$. We claim that $S\subset\Krn\mathcal{F}(\vphi)$.
$(a_1+ka_2,b)-(a_1,b)-k(a_2,b)$ is a typical generator of $S$, and
\begin{align*}
&\mathcal{F}(\vphi)\left( (a_1+ka_2,b)-(a_1,b)-k(a_2,b) \right) \\
&= \vphi(a_1+ka_2,b) -\vphi(a_1,b) -k\vphi(a_2,b) = 0.
\end{align*}
The derivation is similar for the other typical generator
$(a,b_1+kb_2)-(a,b_1)-k(a,b_2)$, verifying the claim.

By Lemma \ref{factor through an epimorphism},  there exists a linear
map $Q(\vphi):A\otimes B\TO C$ such that
$\mathcal{F}(\vphi)=Q(\vphi)\pi$. We see that
\begin{eqnarray*}
 Q(\vphi)\left(\sum_{i}a_i\otimes b_i\right)
&=& Q(\vphi)\pi\left(\sum_{i}(a_i, b_i)\right)
\\
&=& \mathcal{F}(\vphi)\left(\sum_{i}(a_i,
b_i)\right)=\sum_{i}\vphi(a_i, b_i)
\end{eqnarray*}
for all $a_i\in A$ and $b_i\in B$.

To prove the one-to-one correspondence  in the statement, it remains
to show that the functors $\bphi\To P(\bphi)$ and $\vphi\To
Q(\vphi)$ are inverse to each other. On one hand, given the linear
map $\bphi : A \otimes B \TO C$, we have
\begin{eqnarray*}
Q\left(P(\bphi)\right)\left(\sum_{i}a_i\otimes b_i\right) &=&
\sum_{i}P(\bphi)(a_i, b_i) \\
&=& \sum_{i}\bphi(a_i\otimes b_i)=\bphi\left(\sum_{i}a_i\otimes
b_i\right)
\end{eqnarray*}
and thus $Q\left(P(\bphi)\right)=\bphi$; on the other hand,
given the bilinear map  $\vphi:A\times B\TO C$, we have
\begin{eqnarray*}
P\left(Q(\vphi)\right)(a,b)=Q(\vphi)(a\otimes b)=\vphi(a,b)
\end{eqnarray*}
and thus $P\left(Q(\vphi)\right)=\vphi$. This completes the proof.
\end{proof}

\section{Zero product determined algebras}

\begin{defn}
An \emph{algebra} (more precisely, a $\K$-algebra) is a pair $(A,\mu)$ where $A$ is a module and $\mu: \A \TO A$ is a linear map.
\end{defn}

This definition encompasses associative algebras, alternative algebras (an example being the octonions), Leibniz algebras, Lie algebras, and Jordan algebras, among others.
The algebra multiplication is encoded by the map $\mu$ if we define   $a_1a_2 = \mu(a_1 \otimes a_2)$.
In light of Lemma \ref{tensor correspondence}, the condition $\mu: \A \TO A$ is equivalent to the requirement that multiplication be bilinear.
We will denote by $A^2$ the submodule $\Img \mu \leq A$.
Notice that $A^2$ consists of finite sums of products in $A$, not merely the products themselves.

\begin{defn}
The algebra $(A,\mu)$ is called \emph{zero product determined} if whenever a linear map $\vphi: \A \TO B$ satisfies
$$\mu(a_1 \otimes a_2) = 0 \ \textrm{ implies } \ \vphi(a_1 \otimes a_2) = 0,$$
then $\vphi$ factors through $\mu$:
$$
\xymatrix{
\A \ar[rr]^\mu \ar[dr]_\vphi & & A^2 \ar@{-->}[dl]^{\tphi} \\
& B & \\ }
$$
\end{defn}

By Lemma \ref{tensor correspondence}, this definition is in agreement with that given in \cite{Bresar09}.

We will reformulate this definition in terms of kernels and pure tensors below. First, we require some terminology.

Consider an algebra $(A, \mu)$, and recall that   $T = \set{a_1 \otimes a_2}{a_1, a_2 \in A} $ denotes the set of pure tensors.
For any map $\psi: \A \TO B$, let $$T_\psi = T \cap \Krn \psi.$$
In other words, $T_\psi$ is the set comprised of those pure tensors whose image under $\psi$ is 0. In particular, $T_\mu$ is the set of pure tensors of members of $A$ whose product in $A$ is 0.

\begin{lem}\label{zpd reformulation}
$(A, \mu)$ is zero product determined if and only if for every linear map $\vphi: \A \TO B$, we have
$$
T_\mu \subset T_\vphi \ \textrm{ implies that } \ \Krn \mu \subset \Krn \vphi.
$$
\end{lem}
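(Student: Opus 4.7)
The plan is to show that the lemma is essentially a straightforward translation: the hypothesis ``$\vphi$ sends every zero-product pure tensor to $0$'' is literally $T_\mu \subset T_\vphi$, and the conclusion ``$\vphi$ factors through $\mu$'' is, by Lemma \ref{factor through an epimorphism} applied to the surjection $\mu: \A \TO A^2$, equivalent to $\Krn \mu \subset \Krn \vphi$. So the lemma is obtained by pasting these two reformulations together, once in each direction.

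More concretely, I would first fix a linear map $\vphi: \A \TO B$ and observe that the pure-tensor condition
\[
\mu(a_1 \otimes a_2) = 0 \ \text{implies}\ \vphi(a_1 \otimes a_2) = 0 \quad \text{for all } a_1, a_2 \in A
\]
is by definition exactly the inclusion $T_\mu \subset T_\vphi$, since $T_\mu = T \cap \Krn \mu$ and $T_\vphi = T \cap \Krn \vphi$. This identification is the only substantive piece of unpacking.

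For the forward direction, assume $(A,\mu)$ is zero product determined, and suppose $T_\mu \subset T_\vphi$. By the observation, $\vphi$ satisfies the defining hypothesis of zero product determinedness, so there exists a linear map $\tphi : A^2 \TO B$ with $\vphi = \tphi \mu$. Then for any $x \in \Krn \mu$ we have $\vphi(x) = \tphi(\mu(x)) = \tphi(0) = 0$, which gives $\Krn \mu \subset \Krn \vphi$.

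For the converse, assume the stated kernel implication holds, and let $\vphi: \A \TO B$ be a linear map satisfying the defining hypothesis of zero product determinedness. Via the observation this is just $T_\mu \subset T_\vphi$, so by assumption $\Krn \mu \subset \Krn \vphi$. Since $\mu: \A \TO A^2$ is surjective (because $A^2 = \Img\mu$ by definition), Lemma \ref{factor through an epimorphism} produces a unique linear map $\tphi: A^2 \TO B$ with $\vphi = \tphi\mu$, which is exactly the factorization required. I do not anticipate any real obstacle; the only thing to be careful about is to invoke $\mu: \A \TO A^2$ rather than $\mu: \A \TO A$ so that the relevant map is actually surjective and Lemma \ref{factor through an epimorphism} applies.
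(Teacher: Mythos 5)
Your proof is correct and takes essentially the same route as the paper: identify the pure-tensor hypothesis with the inclusion $T_\mu \subset T_\vphi$, and use Lemma \ref{factor through an epimorphism} applied to the surjection $\mu: \A \TO A^2$ to translate ``$\vphi$ factors through $\mu$'' into $\Krn \mu \subset \Krn \vphi$. The paper merely states these two equivalences more tersely; your version spells out both directions but adds nothing substantively different.
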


\begin{proof}
The two conditions "$T_\mu \subset T_\vphi$" and "$\mu(a_1 \otimes a_2) = 0 \text{ implies } \vphi(a_1 \otimes a_2) = 0$" are literally identical, and "$\Krn \mu \subset \Krn \vphi$" is equivalent to $\vphi$ factoring through $\mu$ by Lemma \ref{factor through an epimorphism}.
\end{proof}

Our first main result, stated below, gives a very clear conception of what makes an algebra $(A, \mu)$ zero product determined. First, recall that $\langle T_{\mu} \rangle$ denotes the module generated by $T_{\mu}$.

\begin{thm}\label{zpd charactarization}
$(A, \mu)$ is zero product determined if and only if
$$
\langle T_{\mu} \rangle = \Krn \mu.
$$
\end{thm}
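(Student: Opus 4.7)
The plan is to establish the equivalence via Lemma \ref{zpd reformulation} by showing each direction separately. The inclusion $\langle T_\mu \rangle \subseteq \Krn \mu$ is automatic, since every pure tensor in $T_\mu$ lies in $\Krn \mu$ by definition and $\Krn \mu$ is a submodule. So the content of the theorem is really the reverse inclusion, and the proof hinges on choosing the right test map in the forward direction.

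For the easier direction, assume $\langle T_\mu \rangle = \Krn \mu$. I take an arbitrary linear map $\vphi: \A \TO B$ satisfying $T_\mu \subseteq T_\vphi$. Unpacking, this says $T_\mu \subseteq \Krn \vphi$. Since $\Krn \vphi$ is a submodule, it contains the submodule generated by $T_\mu$, so $\Krn \mu = \langle T_\mu \rangle \subseteq \Krn \vphi$. By Lemma \ref{zpd reformulation}, $(A,\mu)$ is zero product determined.

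For the forward direction, assume $(A,\mu)$ is zero product determined. I need a test map $\vphi$ whose kernel I can control. The natural choice is the canonical projection
$$\vphi: \A \TO (\A) / \langle T_\mu \rangle,$$
whose kernel is precisely $\langle T_\mu \rangle$. By construction, every pure tensor in $T_\mu$ is sent to zero, so $T_\mu \subseteq T_\vphi$. Applying Lemma \ref{zpd reformulation}, I obtain $\Krn \mu \subseteq \Krn \vphi = \langle T_\mu \rangle$, which combined with the trivial inclusion gives equality.

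I do not anticipate a genuine obstacle; the only nonobvious step is recognizing that the quotient projection onto $(\A)/\langle T_\mu \rangle$ is the correct universal test map in the forward direction, and once that is in place both implications reduce to the observation that kernels of linear maps are submodules. The reformulation provided by Lemma \ref{zpd reformulation} does the real work, converting the factorization condition into a direct kernel-containment statement that is tailor-made for this argument.
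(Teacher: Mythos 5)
Your proof is correct and follows essentially the same route as the paper's: the easy direction via $\langle T_\mu\rangle \leq \Krn\vphi$, and the forward direction via the canonical projection $\A \TO (\A)/\langle T_\mu\rangle$ as the test map. The only cosmetic difference is that the paper phrases the forward direction contrapositively while you argue it directly, combining with the trivial inclusion $\langle T_\mu\rangle \subseteq \Krn\mu$.
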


\begin{proof}
If $\langle T_{\mu} \rangle = \Krn \mu$, then $T_\mu \subset T_\vphi$ implies that $\Krn \mu = \langle T_\mu \rangle \leq \langle T_\vphi \rangle \leq \Krn \vphi$, so $\vphi$ factors through $\mu$ whenever $T_\mu \subset T_\vphi$.
On the other hand, if $\langle T_{\mu} \rangle \neq \Krn \mu$, the canonical projection $\pi : \A \TO (\A) / \langle T_\mu \rangle$ satisfies $T_\mu \subset T_\pi$ but by Lemma \ref{factor through an epimorphism}, $\pi$ does not factor through $\mu$, so $(A, \mu)$ is not zero product determined.
\end{proof}

The condition $\langle T_{\mu} \rangle = \Krn \mu$ is just to say that $\Krn \mu$ is generated by its intersection with $T$, i.e. by pure tensors.
Thus, the theorem says that an algebra $A$ is zero product determined if and only if the kernel of its multiplication map is generated by pure tensors.
It is worthwhile to observe that while $\A$ is generated by $T$, an arbitrary submodule $B \leq \A$ is not necessarily generated by its intersection with $T$.
We give a few examples to illustrate this point.

\begin{prop}\label{tensor and symmetric algebras are not zpd}
Suppose $\K$ is a field. Let $V$ be a vector space over $\K$ with $\dim V \geq 2$.
Then, neither the tensor algebra $\mathcal T (V)$ nor the symmetric algebra $\mathcal S (V)$ are zero product determined.
\end{prop}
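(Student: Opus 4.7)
The plan is to apply Theorem \ref{zpd charactarization}: it suffices to show that $\langle T_\mu \rangle \subsetneq \Krn \mu$ in each of $\mathcal{T}(V)$ and $\mathcal{S}(V)$.

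The first observation is that both algebras are integral domains: $\mathcal{T}(V)$ is the free associative $\K$-algebra on $V$, and $\mathcal{S}(V)$ is isomorphic to a polynomial ring over $\K$. Hence any pure tensor $x \otimes y$ with $xy = 0$ must have $x = 0$ or $y = 0$, so $T_\mu = \{0\}$ and $\langle T_\mu \rangle = 0$ in both cases. It thus remains to produce a single nonzero element of $\Krn \mu$ in each algebra.

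Fix linearly independent $a, b \in V$, which exist by the hypothesis $\dim V \geq 2$. For $\mathcal{S}(V)$, commutativity gives $\mu(a \otimes b - b \otimes a) = ab - ba = 0$, and $a \otimes b - b \otimes a$ is nonzero in $A \otimes A$ since $a \otimes b$ and $b \otimes a$ are linearly independent in the subspace $V \otimes V$. For $\mathcal{T}(V)$, the commutator trick fails, so I would instead appeal to associativity: $\mu(ab \otimes a - a \otimes ba) = aba - aba = 0$, and this difference is nonzero because $ab \otimes a$ and $a \otimes ba$ belong to distinct graded summands $V^{\otimes 2} \otimes V$ and $V \otimes V^{\otimes 2}$ of the bigraded space $A \otimes A$.

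The main thing to verify is the nonvanishing of these candidate kernel elements, which reduces to standard linear-independence facts for tensor products of vector spaces (for the symmetric algebra) and to the disjointness of distinct graded components of $A \otimes A$ (for the tensor algebra); in both cases the relevant grading is inherited directly from the tensor powers of $V$.
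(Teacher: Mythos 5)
Your proposal is correct and follows essentially the same route as the paper's own proof: both arguments use the integral-domain property to conclude $T_\mu = \{0\}$ and then exhibit the identical kernel elements $v_1v_2 \otimes v_1 - v_1 \otimes v_2v_1$ (for $\mathcal{T}(V)$) and $v_1 \otimes v_2 - v_2 \otimes v_1$ (for $\mathcal{S}(V)$). Your justification of nonvanishing via the grading on the tensor factors is a slightly more explicit version of the paper's appeal to linear independence.
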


\begin{proof}
Recall that the tensor algebra $\mathcal T (V)$ over a vector space $V$ may be thought of as the free associative $\K$-algebra on $\dim V$ generators. Likewise, the symmetric algebra $\mathcal S (V)$ over $V$ may be thought of as the free commutative associative $\K$-algebra on $\dim V$ generators.

The tensor and symmetric algebras over a vector space are integral domains, and as such, we have that $\mu(t_1 \otimes t_2) = 0$ if and only if $t_1 = 0$ or $t_2 = 0$, in either case giving $t_1 \otimes t_2 = 0$. In short, this means $T_\mu = 0$. To show that these algebras are not zero product determined, we must now show that $\Krn \mu \neq 0$. In other words, we must produce at least one non-trivial kernel element.

Since $\dim V \geq 2$, we may select two $\K$-linearly independent vectors $v_1,v_2 \in V$.

 Then for the tensor algebra $\mathcal T (V)$, consider
the element
$$v_1v_2 \otimes v_1 - v_1 \otimes v_2v_1 \in \mathcal T (V) \otimes \mathcal T (V).$$
(Here, multiplication in $\mathcal T (V)$ is denoted by juxtaposition, to avoid confusion with members of $\mathcal T (V) \otimes \mathcal T (V)$.)
We have that
$$\mu(v_1v_2 \otimes v_1 - v_1 \otimes v_2v_1) = v_1v_2v_1 - v_1v_2v_1 = 0$$
so that $v_1v_2 \otimes v_1 - v_1 \otimes v_2v_1 \in \Krn \mu$, while also $v_1v_2 \otimes v_1 - v_1 \otimes v_2v_1 \neq 0$ by linear independence.

As for the symmetric algebra $\mathcal S (V)$, we may use the element
$$v_1 \otimes v_2 - v_2 \otimes v_1 \in \mathcal S (V) \otimes \mathcal S (V)$$
which is non-zero yet contained in $\Krn \mu$ by commutativity of $\mathcal S (V)$.
\end{proof}

The next proposition shows that the condition on the dimension of $V$ in the previous is quite necessary. We recall that $\K$ is an arbitrary commutative ring. When viewed as a module, $\K$ has an obvious algebra structure by letting $\mu(k_1 \otimes k_2)=k_1k_2$. Furthermore, any ideal $H$ of $\K$ is a submodule of $\K$, and the quotient module $\K /H$, again through the usual multiplication, has an algebra structure.

\begin{prop}\label{cyclic modules are zpd}
Assume $1 \in \K$. For any ideal $H \leq \K$, the ring $\K / H$ is zero product determined when viewed as a $\K$-algebra. In particular, $\K$ is zero product determined as a $\K$-algebra.
\end{prop}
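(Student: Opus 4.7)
The plan is to apply Theorem \ref{zpd charactarization} directly and show the stronger statement that $\Krn \mu = 0$, from which $\langle T_\mu \rangle = 0 = \Krn \mu$ follows trivially. In other words, I would show that for the algebra $A = \K/H$, the multiplication map $\mu : A \otimes A \TO A$ is in fact an isomorphism of $\K$-modules.

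The key observation is that $A = \K/H$ has multiplicative identity $\bar 1 = 1 + H$, and every element of $A$ is of the form $k\bar 1$ for some $k \in \K$. Using bilinearity of the tensor product, I expect to show that every pure tensor $a \otimes b \in A \otimes A$ can be rewritten as $ab \otimes \bar 1$: if $b = k\bar 1$, then
$$
a \otimes b \;=\; a \otimes k\bar 1 \;=\; (ka) \otimes \bar 1 \;=\; (k\bar 1 \cdot a) \otimes \bar 1 \;=\; (ba) \otimes \bar 1 \;=\; (ab) \otimes \bar 1,
$$
using commutativity of $\K$ (hence of $A$) in the last step. Since every element of $A \otimes A$ is a finite sum of pure tensors, this identity propagates to show that every element of $A \otimes A$ has the form $c \otimes \bar 1$ for a unique $c \in A$.

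From here the proof is essentially complete. The map $A \TO A \otimes A$ given by $c \mapsto c \otimes \bar 1$ is $\K$-linear and is a two-sided inverse to $\mu$: on one side, $\mu(c \otimes \bar 1) = c \cdot \bar 1 = c$; on the other, the previous paragraph shows that any element of $A \otimes A$ already has the form $c \otimes \bar 1$ with $c = \mu(c \otimes \bar 1)$. Hence $\mu$ is a $\K$-module isomorphism, so $\Krn \mu = 0$, so $T_\mu = T \cap \Krn \mu = \{0\}$, and therefore $\langle T_\mu \rangle = 0 = \Krn \mu$. Theorem \ref{zpd charactarization} then gives that $\K/H$ is zero product determined, and the case $H = 0$ yields that $\K$ itself is zero product determined. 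The only real obstacle is the small bookkeeping required to justify that every pure tensor collapses to the form $c \otimes \bar 1$; once that identity is established, everything else is immediate.
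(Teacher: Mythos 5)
Your proof is correct and takes essentially the same approach as the paper: both arguments show that every pure tensor in $\K/H \otimes_\K \K/H$ collapses onto a tensor with $\bar 1 = 1 + H$ in one factor, so that $\mu$ is an isomorphism, $\Krn \mu = 0$, and the algebra is trivially zero product determined. The only cosmetic difference is that you normalize to $(ab) \otimes \bar 1$ while the paper normalizes to $(1+H) \otimes (k_1k_2+H)$.
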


\begin{proof}
$\K /H \otimes_\K \K /H \cong \K /H$, with $\mu$ serving as an isomorphism. This is because
\begin{align*}
(k_1 + H) \otimes_\K (k_2 + H) &= k_1(1+H) \otimes_\K (k_2 + H) \\
&= (1 + H) \otimes_\K k_1(k_2+ H) \\
&= (1 + H) \otimes_\K (k_1k_2 + H), \\
\end{align*}
so $k_1k_2+H = 0 + H$ (which is to say $\mu ((k_1+H) \otimes_\K (k_2+H)) = 0$) implies that
\begin{align*}
(k_1+H) \otimes_\K (k_2+H) &= (1+H) \otimes_\K (k_1k_2 +H) \\
&= (1+H) \otimes_\K (0+H) \\
&= 0. \\
\end{align*}
In particular, we have that $\Krn \mu = 0$, so $\K /H$ is trivially zero product determined.
\end{proof}

\section{Direct Sums of Algebras}

In order to state and prove our second main result, we require the following terminology.
Given algebras $(A, \mu)$ and $(B, \lambda)$, we may endow their module direct sum $A \oplus B$ with an algebra structure.
Define
$$ \mu \boxplus \lambda : (A \oplus B) \otimes (A \oplus B) \TO A \oplus B $$
by
$$ \mu \boxplus \lambda ((a_1,b_1) \otimes (a_2,b_2)) = (\mu(a_1 \otimes a_2),\lambda(b_1 \otimes b_2)).$$
$\mu \boxplus \lambda$ is seen to be well-defined by Lemma \ref{tensor correspondence} after noting that the function
$$\overline{\mu \boxplus \lambda}((a_1,b_1),(a_2,b_2)) = (\mu(a_1 \otimes a_2),\lambda(b_1 \otimes b_2))$$
is bilinear.
In this way, $(A \oplus B, \mu \boxplus \lambda)$ is an algebra.
This agrees with the usual meaning of the direct sum of two algebras using component-wise multiplication, since
$$(a_1,b_1)(a_2,b_2) = (a_1a_2,b_1b_2) = \mu \boxplus \lambda ((a_1,b_1) \otimes (a_2,b_2)).$$

The above example illustrates how the direct sum of algebras can be constructed completely in terms of linear maps on tensor products. Our primary purpose in this section is to show that the direct sum $\left(\bigoplus_{i\in I}A_i,\boxplus_{i\in I}\mu_i\right)$ of an arbitrary set of algebras is zero product determined if and only if each component algebra $(A_i, \mu_i)$ is zero product determined.

\begin{lem}\label{canonical isomorphism}
Let $A_i$ ($i \in I$) be modules.
$$ \left( \bigoplus_{i \in I} A_i \right) \otimes \left( \bigoplus_{i \in I} A_i \right) \cong \bigoplus_{i,j \in I} A_i \otimes A_j $$
\end{lem}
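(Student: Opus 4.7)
The plan is to construct explicit linear maps in both directions via the bilinear-map correspondence of Lemma \ref{tensor correspondence} and then verify on generators that they are mutual inverses. Set $A = \bigoplus_{i \in I} A_i$, and let $\iota_i : A_i \TO A$ denote the canonical inclusions and $\pi_j : A \TO A_j$ the canonical projections; these satisfy $\pi_j \iota_i = \delta_{ij}\,\mathrm{id}_{A_i}$ and $\sum_{i \in I} \iota_i \pi_i = \mathrm{id}_A$, the latter being a genuinely finite sum when evaluated at any $a \in A$ since elements of the direct sum have finite support.

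For the forward direction I would define a bilinear map $\bar\Phi : A \times A \TO \bigoplus_{i,j \in I} A_i \otimes A_j$ by $\bar\Phi(a,a') = \sum_{i,j} \pi_i(a) \otimes \pi_j(a')$, a finite sum by finite support of $a,a'$. By Lemma \ref{tensor correspondence}, $\bar\Phi$ lifts to a linear map $\Phi : A \otimes A \TO \bigoplus_{i,j} A_i \otimes A_j$. For the reverse direction, for each $(i,j)$ the bilinear map $A_i \times A_j \TO A \otimes A$ sending $(a_i,a_j) \mapsto \iota_i(a_i) \otimes \iota_j(a_j)$ induces by Lemma \ref{tensor correspondence} a linear map $\psi_{ij} : A_i \otimes A_j \TO A \otimes A$. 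The universal property of the module direct sum then assembles the family $\{\psi_{ij}\}$ into a single linear map $\Psi : \bigoplus_{i,j} A_i \otimes A_j \TO A \otimes A$ with $\Psi|_{A_i \otimes A_j} = \psi_{ij}$.

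It remains to check $\Phi\Psi = \mathrm{id}$ and $\Psi\Phi = \mathrm{id}$, both on generators. For $a_i \otimes a_j$ in the summand indexed by $(i,j)$, the orthogonality relation $\pi_k \iota_i = \delta_{ki}\,\mathrm{id}$ gives $\Phi\Psi(a_i \otimes a_j) = \sum_{k,\ell} \pi_k\iota_i(a_i) \otimes \pi_\ell\iota_j(a_j) = a_i \otimes a_j$. For a pure tensor $a \otimes a' \in A \otimes A$, bilinearity of $\otimes$ together with $\sum_i \iota_i\pi_i = \mathrm{id}_A$ yields
$$ \Psi\Phi(a \otimes a') = \sum_{i,j} \iota_i\pi_i(a) \otimes \iota_j\pi_j(a') = \Bigl(\sum_i \iota_i\pi_i(a)\Bigr) \otimes \Bigl(\sum_j \iota_j\pi_j(a')\Bigr) = a \otimes a'. $$
Since pure tensors generate $A \otimes A$ and the summands generate $\bigoplus_{i,j} A_i \otimes A_j$, both compositions are the identity. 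The only real subtlety — not a substantive obstacle — is bookkeeping with finite support in the infinite-$I$ case, which both guarantees that the defining sum for $\bar\Phi$ is finite and justifies invoking the universal property of $\bigoplus$ to assemble $\Psi$.
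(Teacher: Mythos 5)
Your proof is correct, but it takes a more self-contained route than the paper, which simply cites Proposition~2.1 of Chapter~XVI of Lang's \emph{Algebra} (the distributivity of $\otimes$ over $\bigoplus$) and applies it twice, once in each tensor factor. You instead construct the isomorphism for the double sum directly: the forward map via the bilinear-map correspondence of Lemma~\ref{tensor correspondence}, the inverse by assembling the maps $a_i \otimes a_j \mapsto \iota_i(a_i) \otimes \iota_j(a_j)$ through the universal property of the direct sum, and then a check on generators using $\pi_k \iota_i = \delta_{ki}\,\mathrm{id}$ and $\sum_i \iota_i \pi_i = \mathrm{id}_A$. All the steps hold up: the finite-support observation is exactly what makes $\bar\Phi$ well defined for infinite $I$, checking the compositions on pure tensors and on the individual summands suffices because these generate the respective modules, and the interchange of the double sum with $\otimes$ in the $\Psi\Phi$ computation is justified by bilinearity of the tensor on finite sums. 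What your approach buys is independence from the external reference and an explicit description of the isomorphism (which the paper in fact uses implicitly later, e.g.\ when identifying $\mu|_{A_i \otimes A_j}$ and the projections $p_{i,j}$ in Lemma~\ref{direct sums kernel}); what the paper's citation buys is brevity. Either is acceptable here.
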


\begin{proof}
The lemma is proved after making two application of Proposition 2.1 in Chapter XVI of \cite{Lang02}.
\end{proof}

We will rely on this isomorphism freely without mention in what follows. In particular, we will use the lemma to define $\boxplus_{i\in I}\mu_i$ and derive some simple properties.

\begin{defn}
Given algebras $(A_i,\mu_i)$ $(i\in I)$, their \emph{direct sum} as an algebra is the pair $(A,\mu)$ where $A = \bigoplus_{i\in I}A_i$, and
$$\mu = \boxplus_{i\in I}\mu_i : \bigoplus_{i,j \in I} A_i \otimes A_j \TO A$$
is defined by
$$ \mu|_{A_i \otimes A_j} = \left\{
     \begin{array}{lr}
       \mu_i & : i = j \\
       0 & : i \neq j \\
     \end{array}
   \right.$$
\end{defn}

Since $\mu|_{A_i \otimes A_j}$ is a linear map on each component $A_i \otimes A_j$ of $A$, the map $\mu = \bigoplus_{i,j} \mu_{A_i \otimes A_j}$ is a well-defined linear map on $A$ by the universal property of module direct sums. In particular, we note the fact that
$$ \mu(A_i \otimes A_i) \subset A_i.$$

Our second main result below shows that a direct sum of algebras is zero product determined if and only if each of its components is.

\begin{thm}\label{direct sums}
  Let $(A_i,\mu_i)$ $(i\in I)$ be algebras. Let $(A, \mu)$ be their direct sum as defined above. Then $(A, \mu)$ is zero product determined if and only if $(A_i, \mu_i)$ is zero product determined for all $i \in I$.
\end{thm}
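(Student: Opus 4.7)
The plan is to invoke Theorem \ref{zpd charactarization} and recast the statement as the equivalence
$$\langle T_\mu \rangle = \Krn \mu \quad \Longleftrightarrow \quad \langle T_{\mu_i} \rangle = \Krn \mu_i \text{ for every } i \in I.$$
The identification $A \otimes A \cong \bigoplus_{i,j \in I} A_i \otimes A_j$ from Lemma \ref{canonical isomorphism} is the central tool. Under this identification, $\mu$ restricts to $\mu_i$ on the $(i,i)$-summand and to $0$ on $A_i \otimes A_j$ for $i \neq j$, which gives the clean decomposition
$$\Krn \mu = \Bigl(\bigoplus_{i \in I} \Krn \mu_i\Bigr) \oplus \Bigl(\bigoplus_{i \neq j} A_i \otimes A_j\Bigr).$$
Also, each $A_i \otimes A_j$ (with $i \neq j$) is generated inside $A \otimes A$ by pure tensors of the form $a_i \otimes b_j$ with $a_i \in A_i$, $b_j \in A_j$, all of which lie in $T_\mu$ since $\mu(a_i \otimes b_j) = 0$. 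Under the inclusions $A_i \otimes A_i \hookrightarrow A \otimes A$, one has $T_{\mu_i} \subset T_\mu$ canonically.

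For the forward direction, assume each $(A_i, \mu_i)$ is zero product determined and take $z \in \Krn \mu$. Writing $z = \sum_{i,j} z_{ij}$ with $z_{ij} \in A_i \otimes A_j$, the formula for $\mu$ forces $z_{ii} \in \Krn \mu_i$ for each $i$. By hypothesis, $z_{ii} \in \langle T_{\mu_i} \rangle \subset \langle T_\mu \rangle$, and each off-diagonal piece $z_{ij}$ (for $i \neq j$) lies in $\langle T_\mu \rangle$ by the observation above. Hence $z \in \langle T_\mu \rangle$, and the reverse containment $\langle T_\mu \rangle \subset \Krn \mu$ is automatic.

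For the reverse direction, assume $(A,\mu)$ is zero product determined, fix $i_0 \in I$, and let $x \in \Krn \mu_{i_0}$. Viewing $x \in A_{i_0} \otimes A_{i_0} \subset A \otimes A$, we have $\mu(x) = \mu_{i_0}(x) = 0$, so by hypothesis $x = \sum_k c_k(a_k \otimes b_k)$ for certain $c_k \in \K$ and pure tensors $a_k \otimes b_k \in T_\mu$. Let $\pi_{i_0} : A \TO A_{i_0}$ be the canonical projection; it induces a linear map $\pi_{i_0} \otimes \pi_{i_0} : A \otimes A \TO A_{i_0} \otimes A_{i_0}$. Since $x \in A_{i_0} \otimes A_{i_0}$, applying this map fixes $x$, giving
$$x = \sum_k c_k\bigl(\pi_{i_0}(a_k) \otimes \pi_{i_0}(b_k)\bigr).$$
The final step is to verify that each $\pi_{i_0}(a_k) \otimes \pi_{i_0}(b_k)$ lies in $T_{\mu_{i_0}}$: decomposing $a_k = \sum_i a_k^{(i)}$ and $b_k = \sum_j b_k^{(j)}$ componentwise, one computes $\mu(a_k \otimes b_k) = \sum_i \mu_i(a_k^{(i)} \otimes b_k^{(i)})$, and this sum vanishing in the direct sum $\bigoplus_i A_i$ forces each summand to vanish, in particular $\mu_{i_0}(\pi_{i_0}(a_k) \otimes \pi_{i_0}(b_k)) = 0$. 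Thus $x \in \langle T_{\mu_{i_0}} \rangle$, completing the argument.

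The one step that requires genuine care is the reverse direction, and specifically the projection argument: a pure tensor in $T_\mu$ need not be supported on a single diagonal component $A_i \otimes A_i$, so one cannot simply restrict summands. The key observation that makes it work is that the block-diagonal structure of $\mu$ allows the componentwise vanishing of $\mu(a_k \otimes b_k)$ to be extracted by $\pi_{i_0} \otimes \pi_{i_0}$, transforming an arbitrary expression in $\langle T_\mu \rangle$ into one in $\langle T_{\mu_{i_0}} \rangle$ without disturbing elements already in $A_{i_0} \otimes A_{i_0}$.
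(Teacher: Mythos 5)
Your proof is correct and follows essentially the same route as the paper: both reduce the statement to $\langle T_\mu\rangle = \Krn\mu$ via Theorem \ref{zpd charactarization}, use the same block decomposition of $\Krn\mu$, and rest on the same key observation that $\mu(a\otimes b)=0$ for a pure tensor forces $\mu_i(a^{(i)}\otimes b^{(i)})=0$ componentwise. The only difference is organizational: the paper packages that observation as a standalone decomposition $\langle T_{\mu}\rangle = \bigl(\bigoplus_{i}\langle T_{\mu_i}\rangle\bigr)\oplus\bigl(\bigoplus_{i\ne j}A_i\otimes A_j\bigr)$ (Lemma \ref{direct sums pure tensor}), whereas you distribute the two containments across the two directions of the equivalence, using the projection $\pi_{i_0}\otimes\pi_{i_0}$ in the reverse direction.
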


To prove Theorem \ref{direct sums}, we need the following two lemmas about $(A, \mu)$.

\begin{lem}\label{direct sums kernel}
  $\displaystyle \Krn \mu=\left(\bigoplus_{i\in I}\Krn \mu_i\right)\oplus\left(\bigoplus_{i,j\in I,\ i\ne j}A_i\otimes A_j\right)$.
\end{lem}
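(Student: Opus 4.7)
The plan is to prove the equality by establishing the two inclusions separately, working through the decomposition $A\otimes A\cong\bigoplus_{i,j\in I}A_i\otimes A_j$ afforded by Lemma \ref{canonical isomorphism}.

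The containment ``$\supseteq$'' is immediate from the definition of $\mu$. On each summand $A_i \otimes A_i$ the map $\mu$ agrees with $\mu_i$, so $\Krn\mu_i \subseteq \Krn\mu$. On each off-diagonal summand $A_i\otimes A_j$ with $i\ne j$ the map $\mu$ is identically zero, so $A_i\otimes A_j\subseteq\Krn\mu$. Since $\Krn\mu$ is a submodule, it contains the sum of all such pieces, which is the right-hand side.

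For ``$\subseteq$'', take an arbitrary $x\in\Krn\mu$ and use Lemma \ref{canonical isomorphism} to write
$$ x = \sum_{i,j\in I} x_{ij}, \qquad x_{ij}\in A_i\otimes A_j, $$
with only finitely many $x_{ij}$ nonzero. Applying $\mu$ componentwise and using $\mu|_{A_i\otimes A_j}=0$ for $i\ne j$ and $\mu|_{A_i\otimes A_i}=\mu_i$, we get
$$ 0 = \mu(x) = \sum_{i\in I}\mu_i(x_{ii}). $$
Now $\mu_i(x_{ii})\in A_i$, and $A=\bigoplus_{i\in I}A_i$ is an internal direct sum of the $A_i$, so the vanishing of this sum forces $\mu_i(x_{ii})=0$ for every $i$, i.e.\ $x_{ii}\in\Krn\mu_i$. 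The off-diagonal pieces $x_{ij}$ with $i\ne j$ are unconstrained, so
$$ x \;=\; \sum_{i\in I}x_{ii} \;+\; \sum_{i\ne j} x_{ij} \;\in\; \Bigl(\bigoplus_{i\in I}\Krn\mu_i\Bigr)\oplus\Bigl(\bigoplus_{i\ne j}A_i\otimes A_j\Bigr), $$
which establishes the reverse inclusion.

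No real obstacle is anticipated; the argument is essentially bookkeeping. The only point requiring care is the use of the direct-sum decomposition of $A$ at the end: because $\mu_i(x_{ii})$ lives entirely inside the $i$-th component $A_i$, there can be no cross-cancellation between distinct indices, which is precisely what allows the single equation $\mu(x)=0$ to split into the family of equations $\mu_i(x_{ii})=0$. All other steps follow directly from the definition of $\mu$ on the summands of $A\otimes A$.
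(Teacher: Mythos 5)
Your proof is correct and follows essentially the same route as the paper's: the forward inclusion is read off from the definition of $\mu$ on the summands, and the reverse inclusion decomposes an element of $\Krn\mu$ via Lemma \ref{canonical isomorphism}, applies $\mu$ componentwise, and uses the directness of $A=\bigoplus_{i\in I}A_i$ to split $0=\sum_{i}\mu_i(x_{ii})$ into $\mu_i(x_{ii})=0$ for each $i$. No substantive differences.
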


\begin{proof}
    Clearly $ \displaystyle \Krn \mu \supset \left(\bigoplus_{i\in I}\Krn \mu_i\right)\oplus\left(\bigoplus_{i,j\in I,\ i\ne j}A_i\otimes A_j\right) $. We will demonstrate the reverse inclusion.
    Denote by $p_{i,j}$ the canonical projection $p_{i,j} : A \TO A_i \otimes A_j$. Let $a \in \Krn \mu$. Then
    \begin{align*}
        0 &= \mu (a) \\
        &= \sum_{i,j \in I} \mu \left( p_{i,j}(a) \right) \\
        &= \sum_{i \in I} \mu(p_{i,i}(a)). \\
    \end{align*}
    Each $\mu(p_{i,i}(a)) \in A_i$, so each $\mu(p_{i,i}(a)) = 0$ by linear independence, giving $ \displaystyle a \in \left(\bigoplus_{i\in I}\Krn \mu_i\right)\oplus\left(\bigoplus_{i,j\in I,\ i\ne j}A_i\otimes A_j\right)$.
\end{proof}

\begin{lem}\label{direct sums pure tensor}
  $\displaystyle \langle T_{\mu}\rangle
      = \left(\bigoplus_{i\in I}\langle T_{\mu_i}\rangle\right)\oplus\left(\bigoplus_{i,j\in I,\ i\ne j}A_i\otimes A_j\right)$.
\end{lem}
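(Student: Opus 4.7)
The plan is to prove both inclusions of the claimed equality using the canonical isomorphism of Lemma \ref{canonical isomorphism}, which identifies $A \otimes A$ with $\bigoplus_{i,j \in I} A_i \otimes A_j$.

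For the inclusion $\supset$, I would argue that each summand on the right is generated by pure tensors that lie in $T_{\mu}$. For a fixed $i$, any pure tensor $a_i \otimes b_i \in T_{\mu_i}$ may be regarded as a pure tensor in $A \otimes A$, and by the definition of $\mu$ we have $\mu(a_i \otimes b_i) = \mu_i(a_i \otimes b_i) = 0$, so $T_{\mu_i} \subset T_\mu$ and hence $\langle T_{\mu_i} \rangle \subset \langle T_\mu \rangle$. For $i \neq j$ and any $a_i \in A_i$, $b_j \in A_j$, the pure tensor $a_i \otimes b_j$ satisfies $\mu(a_i \otimes b_j) = 0$ by definition of $\mu$ on off-diagonal components, so every such pure tensor lies in $T_\mu$. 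Since $A_i \otimes A_j$ is generated by these pure tensors, $A_i \otimes A_j \subset \langle T_\mu \rangle$. Summing, the whole right-hand side is contained in $\langle T_\mu \rangle$.

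For the inclusion $\subset$, it suffices to show that every pure tensor $a \otimes b \in T_\mu$ lies in the right-hand side. Given such $a, b \in A$, write $a = \sum_i a_i$ and $b = \sum_j b_j$ as finite sums with $a_i, b_j$ in the corresponding components. Then via the canonical isomorphism,
$$a \otimes b = \sum_{i,j \in I} a_i \otimes b_j = \sum_{i \in I} a_i \otimes b_i + \sum_{i \neq j} a_i \otimes b_j.$$
The second sum obviously lies in $\bigoplus_{i \neq j} A_i \otimes A_j$. For the first sum, the hypothesis $a \otimes b \in T_\mu$ gives
$$0 = \mu(a \otimes b) = \sum_{i \in I} \mu_i(a_i \otimes b_i),$$
and since $\mu_i(a_i \otimes b_i) \in A_i$ for each $i$, the independence of the direct summands forces $\mu_i(a_i \otimes b_i) = 0$ for every $i$. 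Thus each $a_i \otimes b_i \in T_{\mu_i}$, so the first sum lies in $\bigoplus_i \langle T_{\mu_i} \rangle$. Therefore $a \otimes b$ lies in the right-hand side, and since $\langle T_\mu \rangle$ is generated by such pure tensors, the inclusion follows.

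The argument is essentially bookkeeping, and the only delicate point is invoking the independence of the direct summands of $A$ (which was already used in Lemma \ref{direct sums kernel}) to conclude that a sum of elements, each in a distinct $A_i$, vanishes only if each term vanishes. No further computation is needed.
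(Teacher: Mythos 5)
Your proof is correct and follows essentially the same route as the paper's: the forward inclusion by observing that diagonal pure tensors in $T_{\mu_i}$ and all off-diagonal pure tensors lie in $T_\mu$, and the reverse inclusion by decomposing a pure tensor $a \otimes b \in T_\mu$ as $\sum_{i,j} a_i \otimes b_j$ and using independence of the summands to force each $\mu_i(a_i \otimes b_i) = 0$. The only cosmetic difference is that the paper cites Lemma \ref{direct sums kernel} for the first inclusion where you argue directly from the definition of $\mu$.
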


\begin{proof}
By Lemma \ref{direct sums kernel}, $ T_{\mu_i} \subset T_{\mu}$ for $i\in I$ and $A_i\otimes A_j\subset \langle T_{\mu}\rangle$ for $i,j\in I$ and $i\ne j$. Therefore,
      $$\left(\bigoplus_{i\in I}\langle T_{\mu_i}\rangle\right)\oplus\left(\bigoplus_{i,j\in I,\ i\ne j}A_i\otimes A_j\right)\subset \langle T_{\mu}\rangle.
      $$

      Conversely, every element of $T_{\mu}$ is of the form
      $$a\otimes a'=\left(\sum_{i\in I}a_i\right)\otimes\left(\sum_{i\in I}a_i'\right)=\sum_{i, j\in I}a_i\otimes a_j',$$
      where $a_i,a_i'\in A_i$, $a_i=0$ and $a_i'=0$ for all but finitely many $i$, and
      $$0=\mu(a\otimes a')=  \sum_{i,j\in I}\delta_{ij}\mu_i(a_i\otimes a_j')
      =\sum_{i\in I}\mu_i(a_i\otimes a_i').$$
      Therefore, $a_i\otimes a_i'\in T_{\mu_i}$ and
       $$a\otimes a'\in
       \left(\bigoplus_{i\in I}\langle T_{\mu_i}\rangle\right)\oplus\left(\bigoplus_{i,j\in I,\ i\ne j}A_i\otimes A_j\right)
       .$$
      This completes the proof.
\end{proof}

\begin{proof}[Proof of Theorem \ref{direct sums}]
By Theorem \ref{zpd charactarization}, $(A,\mu)$ is zero product determined if and only if $\Krn \mu=\langle T_{\mu}\rangle$.
By Lemmas \ref{direct sums kernel} and \ref{direct sums pure tensor},
$\Krn \mu=\langle T_{\mu}\rangle$ if and only if
$$
\left(\bigoplus_{i\in I}\Krn \mu_i\right)\oplus\left(\bigoplus_{i,j\in I,\ i\ne j}A_i\otimes A_j\right)
=
\left(\bigoplus_{i\in I}\langle T_{\mu_i}\rangle\right)\oplus\left(\bigoplus_{i,j\in I,\ i\ne j}A_i\otimes A_j\right),
$$
that is, if and only if
$\Krn \mu_i=\langle T_{\mu_i}\rangle$
for all $i\in I$,
and in turn
$\Krn \mu_i=\langle T_{\mu_i}\rangle$
for all $i\in I$ if and only if
$(A_i,\mu_i)$ is zero product determined for all $i\in I$.
\end{proof}

\section{Applications to Lie algebras}

An abelian Lie algebra $A$ is a Lie algebra with trivial multiplication. In other words, $\Krn \mu = \A$, so that $A$ is trivially seen to be zero product determined.

\begin{lem}\label{abelian Lie algebras are zpd}
Let $A$ be an abelian Lie algebra. Then $A$ is zero product determined.
\end{lem}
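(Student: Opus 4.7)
The plan is to appeal directly to Theorem \ref{zpd charactarization}, which reduces the problem to showing $\langle T_\mu \rangle = \Krn \mu$. Since $A$ is abelian, the multiplication map $\mu : A \otimes A \TO A$ is identically zero, so both sides of the equation we need to verify should be the whole tensor product $A \otimes A$.

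First I would observe that $\Krn \mu = A \otimes A$ because $\mu = 0$. Next, since $\mu(a_1 \otimes a_2) = 0$ for every pure tensor $a_1 \otimes a_2 \in T$, we have $T_\mu = T \cap \Krn \mu = T$. It is a standard fact (already noted in the paper after the definition of pure tensors) that every element of $A \otimes A$ is a finite sum of pure tensors, so $\langle T \rangle = A \otimes A$. Combining these, $\langle T_\mu \rangle = \langle T \rangle = A \otimes A = \Krn \mu$, and Theorem \ref{zpd charactarization} yields the conclusion.

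There is no real obstacle here; the lemma is essentially an immediate corollary of the characterization theorem, and the remark preceding the statement (``$\Krn \mu = A \otimes A$, so that $A$ is trivially seen to be zero product determined'') already sketches the argument. The entire write-up will fit in a sentence or two.
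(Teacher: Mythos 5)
Your proposal is correct and is essentially identical to the paper's own proof: both note that $\mu = 0$ forces $T_\mu = T$ and $\Krn\mu = A \otimes A$, so $\langle T_\mu\rangle = \langle T\rangle = A\otimes A = \Krn\mu$, and then invoke Theorem \ref{zpd charactarization}. No gaps.
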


\begin{proof}
That $A$ is abelian means that $T_\mu = T$, so that
$$\langle T_\mu \rangle = \langle T \rangle = \A = \Krn \mu.$$
By Theorem \ref{zpd charactarization}, $A$ is zero product determined.
\end{proof}

Recall that a semi-simple Lie algebra $L$ decomposes as the direct sum of simple ideals $L = L_1 \oplus \cdots \oplus L_r$ and that a reductive Lie algebra $L$ decomposes as $L = L_0\oplus L_1\oplus\cdots\oplus L_r$, where $L_0$ is the center of L and $L_1\oplus\cdots\oplus L_r$ is semi-simple. See \cite{Bourbaki75} and \cite{Humphreys72} for details.

In \cite{Wang11a}, Wang, et. al. show  that every parabolic subalgebra of a finite-dimensional simple Lie algebra over an algebraically-closed field of characteristic zero is zero product determined.
Combining this result with our result for direct sums (Theorem \ref{direct sums}) and the above lemma significantly broadens the class of Lie algebras known to be zero-product determined.  Below is our third main result.

\begin{thm}\label{reductive lie algebras are zpd}
Let $L$ be a reductive Lie algebra over an algebraically closed field $\K$ of characteristic 0. Then every parabolic subalgebra $P$ of $L$ is zero product determined. In particular, $L$   is zero product determined.
\end{thm}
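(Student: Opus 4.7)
The plan is to reduce to the case of a parabolic in a simple Lie algebra via the direct sum result (Theorem~\ref{direct sums}) and apply the result of Wang, et.\ al.\ cited above.  First I would invoke the standard structure theorem to write
\[
L = L_0 \oplus L_1 \oplus \cdots \oplus L_r,
\]
with $L_0$ the (abelian) center and $L_1,\ldots,L_r$ simple ideals.  Note that $L$ itself is a parabolic subalgebra of $L$, so it suffices to prove the statement about parabolics.

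The key structural step, and the main obstacle, is to show that every parabolic subalgebra $P\subset L$ decomposes as
\[
P = L_0 \oplus P_1 \oplus \cdots \oplus P_r,
\]
where each $P_i$ is a parabolic subalgebra of $L_i$.  I would argue this by first fixing a Cartan subalgebra $H$ of $L$ contained in $P$ together with a Borel subalgebra $B\subset P$; because the ideals $L_i$ are simple and pairwise commuting, $H$ decomposes as $L_0 \oplus H_1\oplus\cdots\oplus H_r$ with each $H_i$ a Cartan of $L_i$, and the root system $\Phi$ of $L$ with respect to $H$ (supported on the semisimple part) splits as a disjoint union $\Phi_1\sqcup\cdots\sqcup\Phi_r$ with corresponding decomposition of simple roots $\Delta=\Delta_1\sqcup\cdots\sqcup\Delta_r$.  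The standard parabolic containing $B$ is then determined by a subset $S\subseteq\Delta$, and since each simple root belongs to a unique $\Delta_i$, the corresponding parabolic $P_S$ decomposes as $L_0\oplus\bigoplus_{i=1}^r P_{S\cap\Delta_i}$, with $P_{S\cap\Delta_i}$ a standard parabolic of $L_i$.  Since every parabolic of $L$ is conjugate to a standard one and conjugation respects the ideal decomposition, the decomposition of $P$ claimed above holds in general.  (These are classical facts; a reference such as \cite{Bourbaki75} or \cite{Humphreys72} can be cited to keep the argument brief.)

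Once this decomposition is in hand, the conclusion is immediate.  The center $L_0$, being abelian, is zero product determined by Lemma~\ref{abelian Lie algebras are zpd}.  Each $L_i$ is a finite-dimensional simple Lie algebra over an algebraically closed field of characteristic zero, and each $P_i$ is a parabolic subalgebra of $L_i$, hence zero product determined by the result of Wang, et.\ al.\ in \cite{Wang11a}.  Applying Theorem~\ref{direct sums} to the decomposition $P = L_0\oplus P_1\oplus\cdots\oplus P_r$ yields that $P$ is zero product determined.  Specializing to $P=L$ gives the final conclusion that $L$ itself is zero product determined.
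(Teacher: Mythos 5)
Your proposal is correct and follows essentially the same route as the paper: decompose $L$ as $L_0 \oplus L_1 \oplus \cdots \oplus L_r$, reduce to a standard parabolic of the form $L_0 \oplus P_1 \oplus \cdots \oplus P_r$, and combine Lemma~\ref{abelian Lie algebras are zpd}, the result of Wang et al.\ for parabolics of simple Lie algebras, and Theorem~\ref{direct sums}. The only difference is that you spell out the root-system argument for the decomposition of a standard parabolic, which the paper simply asserts with a reference.
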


\begin{proof}
The reductive Lie algebra $L$ can be decomposed as $L=L_0\oplus L_1\oplus\cdots\oplus L_r$, where $L_0=Z(L)$ is an abelian ideal, and $L_1,\cdots, L_r$ are simple ideals of $L$.
Every parabolic subalgebra $P$ of $L$ is isomorphic by some automorphism to a standard parabolic subalgebra $P'$ of the form $L_0\oplus P_1\oplus\cdots\oplus P_r$, where $P_i$ is a parabolic subalgebra of $L_i$ for $i=1,\cdots, r$. Then $P'$ is zero product determined by Theorem \ref{direct sums}, whence so is $P$.
\end{proof}


\begin{bibdiv}

\begin{biblist}

\bib{Alaminos09}{article}{
    title={Maps preserving zero products},
    author={Alaminos, J.},
    author={Bre\v{s}ar, M.},
    author={Extremera, J.},
    author={Villena, A. R.},
    journal={Studia Mathematica},
    volume={193},
    date={2009},
    pages={131--159}}

\bib{Atiyah69}{book}{
    title={Introduction to Commutative Algebra},
    author={Atiyah, Michael F.},
    author={Macdonald, Ian G.},
    date={1969},
    publisher={Addison-Wesley},
    address={Reading Massachusetts}}

\bib{Birkhoff65}{book}{
    title={Algebra},
    author={Birkhoff, Garrett}
    author={Mac Lane, Saunders},
    date={1965},
    publisher={Macmillan},
    address={New York}}

\bib{Bourbaki75}{book}{
    title={Elements of mathematics. Lie groups and Lie algebras},
    author={Bourbaki, N.},
    date={1975},
    publisher={Addison-Wesley}}

\bib{Bresar09}{article}{
    title={Zero product determined matrix algebras},
    author={Bre\v{s}ar, Matej},
    author={Gra\v{s}i\v{c}, Mateja},
    author={Ortega, Juana S\'{a}nchez},
    journal={Linear Algebra and its Applications},
    volume={430},
    date={2009},
    pages={1486--1498}}

\bib{Bresar06}{article}{
    title={On bilinear maps on matrices with applications to commutativity preservers},
    author={Bre\v{s}ar, Matej},
    author={\v{S}emrl, Peter},
    journal={Journal of Algebra},
    volume={301},
    date={2006},
    pages={803--837}}

\bib{Chebotar03}{article}{
    title={Mappings preserving zero products},
    author={Chebotar, M. A.},
    author={Ke, W.-F.},
    author={Lee, P.-H.},
    author={Wong, N.-C.},
    journal={Studia Mathematica},
    volume={155},
    date={2003},
    pages={77--94}}

\bib{Chebotar06}{article}{
    title={On maps preserving zero Jordan products},
    author={Chebotar, Mikhail A.},
    author={Ke, Wen-Fong},
    author={Lee, Pjek-Hwee},
    author={Zhang, Ruibin},
    journal={Monatsheft f\"{u}r Mathematik},
    volume={149},
    date={2009},
    pages={91-101}}

\bib{Grasic10}{article}{
    title={Zero product determined classical Lie algebras},
    author={Gra\v{s}i\v{c}, Mateja},
    journal={Linear and Multilinear Algebra},
    volume={58},
    date={2010},
    pages={1007--1022}}

\bib{Humphreys72}{book}{
    title={Introduction to Lie Algebras and Representation Theory},
    author={Humphreys, James E.},
    date={1974},
    publisher={Springer},
    address={New York}}

\bib{Lang02}{book}{
    title={Algebra},
    author={Lang, Serge},
    date={2002},
    publisher={Springer},
    address={New York}}

\bib{Wang11a}{article}{
    title={Idempotent elements determined matrix algebras},
    author={Wang,Dengyin},
    author={Li,Xiaowei},
    author={Ge,Hui},
    journal={Linear Algebra and its Applications},
    volume={},
    date={preprint},
    pages={}}

\bib{Wang11b}{article}{
    title={A class of zero product determined Lie algebras},
    author={Wang, Dengyin},
    author={Yu, Xiaoxiang},
    author={Chen, Zhengxin},
    journal={Journal of Algebra},
    volume={331},
    date={2011},
    pages={145--151}}

\end{biblist}

\end{bibdiv}

\end{document}